\renewcommand{\baselinestretch}{\baselinestretch}
\renewcommand{\baselinestretch}{1.1}
\numberwithin{equation}{section}
\newtheorem{thm}{Theorem}[section]
\newtheorem{lem}[thm]{Lemma}
\newtheorem{prop}[thm]{Proposition}
\theoremstyle{definition}
\theoremstyle{remark}
\newtheorem{rmk}[thm]{Remark}
\numberwithin{equation}{section}
\newcommand{\gen}{\text{gen}}
\newcommand{\n}{{\mathbb N}}
\newcommand{\z}{{\mathbb Z}}
\begin{document}
\title[]{Tight universal sums of $m$-gonal numbers}

\author{Jangwon Ju and Mingyu Kim}

\address{Department of Mathematics, University of Ulsan, Ulsan 44610, Korea}
\email{jangwonju@ulsan.ac.kr}

\address{Department of Mathematics, Sungkyunkwan University, Suwon 16419, Korea}
\email{kmg2562@skku.edu}

\thanks{This research of the first author was supported by the National Research Foundation of Korea(NRF) grant funded by the Korea government(MSIT) (NRF-2019R1F1A1064037)}

\thanks{This research of the second author was supported by the National Research Foundation of Korea(NRF) grant funded by the Korea government(MSIT) (NRF-2021R1C1C2010133)}

\subjclass[2020]{11D09} \keywords{sums of polygonal numbers, tight universal}
\begin{abstract}
For a positive integer $n$, the set of all integers greater than or equal to $n$ is denoted by $\mathcal T(n)$.
A sum of generalized $m$-gonal numbers $g$ is called tight $\mathcal T(n)$-universal if the set of all nonzero integers represented by $g$ is equal to $\mathcal T(n)$.
In this article, we prove the existence of a minimal tight $\mathcal T(n)$-universality criterion set for 
a sum of generalized $m$-gonal numbers for any pair $(m,n)$.
To achieve this, we introduce an algorithm giving all candidates for tight $\mathcal T(n)$-universal sums of generalized $m$-gonal numbers for any given pair $(m,n)$.
Furthermore, we provide some experimental results on the classification of tight $\mathcal T(n)$-universal sums of generalized $m$-gonal numbers.
\end{abstract}
\maketitle

\section{Introduction}
A positive definite integral quadratic form
$$
f=f(x_1,x_2,\dots,x_k)=\sum_{1\le i,j\le k}a_{ij}x_ix_j\ \ (a_{ij}=a_{ji}\in \z)
$$
is called {\it universal} if it represents all positive integers.
Lagrange's four-square theorem states that the quaternary quadratic form $x^2+y^2+z^2+w^2$ is universal.
Ramanujan \cite{R} found all diagonal quaternary universal quadratic forms.
In 1993, Conway and Schneeberger announced ``15-Theorem" which says that a (positive definite integral) quadratic form representing all positive integers up to 15 actually represents every positive integer.
Bhargava \cite{B} introduced an algorithm, called the escalation method, which yields the classification of universal quadratic forms (see also \cite{C}).
The escalation method shows that if an integral quadratic form $f$ represents nine integers 1,2,3,5,6,7,10,14 and 15, then $f$ is universal.
Kim, Kim and Oh \cite{KKO} generalized this result and proved that for any infinite set $S$ of quadratic forms of bounded rank, there is a finite subset $S_0$ of $S$ such that any (positive definite integral) quadratic form representing every form in $S_0$ represents all of $S$.
Following \cite{KLO}, we call such a set $S_0$ {\it an $S$-universality criterion set}.
An $S$-universality criterion set $S_0$ is called {\it minimal} if no proper subset $S'_0$ of $S_0$ is an $S$-universality criterion set.

For an integer $m\ge 3$, we define a polynomial $P_m(x)$ by
$$
P_m(x)=\frac{(m-2)x^2-(m-4)x}{2}.
$$
An integer of the form $P_m(u)$ for some integer $u$ is called a generalized $m$-gonal number.
A polynomial of the form
$$
a_1P_m(x_1)+a_2P_m(x_2)+\cdots+a_kP_m(x_k)
$$
with positive integers $a_1,a_2,\dots,a_k$ is called {\it a sum of generalized $m$-gonal numbers} or {\it an $m$-gonal form}.
In \cite{KL}, Kane and Liu proved that there is a constant $\gamma_m$ such that if a sum of generalized $m$-gonal numbers represents all positive integers up to $\gamma_m$, then it represents all positive integers.
By applying escalation method to sums of generalized $m$-gonal numbers, they showed the existence of such $\gamma_m$ and found an asymptotic upper bound of $\gamma_m$ in a function of $m$.

In this article, we introduce an algorithm giving all tight $\mathcal T(n)$-universal $m$-gonal forms and provide some experimental results of the algorithm.
In Section 2, some basic notation and terminologies will be given.
In Section 3, we introduce an algorithm which gives the classification of tight $\mathcal T(n)$-universal $m$-gonal forms for each given pair $(m,n)$.
Note that this algorithm is analogous to the escalation algorithm described by Bhargava, and when $n=1$, it coincides with the algorithm for universal $m$-gonal forms appeared in \cite{KL}.
In Section 4, we provide some experimental results on the algorithm described in Section 3 including candidates of tight $\mathcal T(n)$-universal $m$-gonal forms for $m=7,9,10$ and 11.

\section{Preliminaries}
For $k=1,2,3,\dots$, we define a set $\mathcal{N}(k)$ to be the set of all vectors of positive integers with length $k$ and coefficients in the ascending order, i.e.,
$$
\mathcal{N}(k)=\{ \mathbf{a}=(a_1,a_2,\dots,a_k)\in \n^k : a_1\le a_2\le \cdots \le a_k\}.
$$
Put $\mathcal{N}=\bigcup_{k=1}^{\infty}\mathcal{N}(k)$.
For two vectors $\mathbf{a}\in \mathcal{N}(k)$ and $\mathbf{b}\in \mathcal{N}(s)$ with $k\le s$, we write
$$
\mathbf{a}\preceq \mathbf{b}\ (\mathbf{a} \prec \mathbf{b})
$$
if the sequence $(a_i)_{1\le i\le k}$ is a (proper) subsequence of $(b_j)_{1\le j\le s}$, where
$$
\mathbf{a}=(a_1,a_2,\dots,a_k)\ \ \text{and}\ \ \mathbf{b}=(b_1,b_2,\dots,b_s).
$$
Given a vector $\mathbf{a}\in \mathcal{N}(k)$ and a positive integer $a$, we define a vector $\mathbf{a}*a$ by
$$
\mathbf{a}*a=(a_1,a_2,\dots,a_i,a,a_{i+1},a_{i+2},\dots,a_k)\in \mathcal{N}(k+1),
$$
where $i$ is the maximum index satisfying $a_i\le a$, i.e., $\mathbf{a}*a$ is the vector in $\mathcal{N}(k+1)$ with coefficients $a_1,a_2,\dots,a_k$ and $a$.
For $\mathbf{a}\in \mathcal{N}(k)$ and $\mathbf{b}=(b_1,b_2,\dots,b_s)\in \mathcal{N}(s)$, we define $\mathbf{a}*\mathbf{b}$ to be the vector
$$
\mathbf{a}*b_1*b_2*\cdots*b_s \in \mathcal{N}(k+s).
$$
We identify $\mathcal{N}(1)$ with $\n$, e.g., $3*7*2*5$ denotes the vector $(2,3,5,7)\in \mathcal{N}(4)$.
Let $S$ be a set of nonnegative integers containing 0 and 1, and let $n$ be a positive integer.
For a vector $\mathbf{a}=(a_1,a_2,\dots,a_k)\in \mathcal{N}(k)$, we define
$$
R_S(\mathbf{a})=\{ a_1s_1+a_2s_2+\cdots+a_ks_k : s_i\in S\} \ \ \text{and}\ \ R'_S(\mathbf{a})=R_S(\mathbf{a})-\{0\}.
$$
Let $\mathcal{GP}_m$ be the set of generalized $m$-gonal numbers, i.e.,
$$
\mathcal{GP}_m=\left\{ P_m(u) : u\in \z \right\}.
$$
Then an $m$-gonal form
$$
a_1P_m(x_1)+a_2P_m(x_2)+\cdots+a_kP_m(x_k)\ \ (a_1\le a_2\le \cdots \le a_k)
$$
corresponds to the pair $(\mathcal{GP}_m,\mathbf{a})$, where $\mathbf{a}=(a_1,a_2,\dots,a_k)\in \mathcal{N}(k)$.
In this article, a pair $(\mathcal{GP}_m,\mathbf{a})$ ($\mathbf{a}\in \mathcal{N}(k)$) will also be called a $k$-ary $m$-gonal form.
Let $n$ be a positive integer.
An $m$-gonal form $(\mathcal{GP}_m,\mathbf{a})$ is called {\it $\mathcal T(n)$-universal} if $R'_{\mathcal{GP}_m}(\mathbf{a})\supseteq \mathcal T(n)$, and {\it tight $\mathcal T(n)$-universal} if $R'_{\mathcal{GP}_m}(\mathbf{a})=\mathcal T(n)$.
A tight $\mathcal T(n)$-universal $m$-gonal form $(\mathcal{GP}_m,\mathbf{a})$ is called {\it new} if $R'_{\mathcal{GP}_m}(\mathbf{b})\subsetneq \mathcal T(n)$ for every vector $\mathbf{b}\in \mathcal{N}$ satisfying $\mathbf{b}\prec \mathbf{a}$.
When $n=1$, we use the expression ``universal" along with ``tight $\mathcal T(1)$-universal" to follow the convention.

\begin{lem} \label{lemexist}
Let $m$ be an integer greater than or equal to 3 and $n$ be a positive integer.
Then there exists a vector $\mathbf{a}$ such that $R'_{\mathcal{GP}_m}(\mathbf{a})=\mathcal T(n)$.
\end{lem}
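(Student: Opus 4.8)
The plan is to construct $\mathbf{a}$ explicitly as the concatenation of two blocks: a short block that forces every integer in the range $[n,2n-1]$ to be represented, and a rescaled universal $m$-gonal form that takes care of all larger integers. The elementary facts I will use are $P_m(0)=0$, $P_m(1)=1$, and $P_m(u)\ge 0$ for every $u\in\z$ and every $m\ge 3$. In particular, if every entry of a vector $\mathbf{b}$ is at least $n$, then every nonzero element of $R_{\mathcal{GP}_m}(\mathbf{b})$ is at least $n$, because a nonzero value $\sum_i b_iP_m(x_i)$ must have some $P_m(x_i)\ge 1$, forcing the sum to be $\ge\min_i b_i\ge n$. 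Hence the inclusion $R'_{\mathcal{GP}_m}(\mathbf{a})\subseteq\mathcal T(n)$ will hold automatically as soon as every coefficient used is $\ge n$, and the real content is the reverse inclusion.

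First I would fix a universal $m$-gonal form. By Cauchy's polygonal number theorem every nonnegative integer is a sum of $m$ (generalized) $m$-gonal numbers, so the vector $\mathbf{c}=(\underbrace{1,1,\dots,1}_{m})$ satisfies $R_{\mathcal{GP}_m}(\mathbf{c})=\z_{\ge 0}$; alternatively one may invoke the escalation argument of Kane--Liu recalled above. Then I would set
$$
\mathbf{a}=(n,\,n+1,\,n+2,\,\dots,\,2n-1)*(nc_1,nc_2,\dots,nc_m),
$$
so that every entry of $\mathbf{a}$ is $\ge n$, while the first block already represents every integer $j$ with $n\le j\le 2n-1$ via the term $j\cdot P_m(1)=j$.

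To represent an arbitrary $N\ge 2n$, write $N=qn+s$ with $q=\lfloor (N-n)/n\rfloor\ge 1$ and $s=N-qn\in[n,2n-1]$. Since $\mathbf{c}$ is universal, choose integers $y_1,\dots,y_m$ with $\sum_i c_iP_m(y_i)=q$; then $\sum_i(nc_i)P_m(y_i)=nq$, and adding the single term $s\cdot P_m(1)=s$ coming from the first block represents $N$. This shows $\mathcal T(n)\subseteq R'_{\mathcal{GP}_m}(\mathbf{a})$, and combined with the automatic reverse inclusion we get $R'_{\mathcal{GP}_m}(\mathbf{a})=\mathcal T(n)$.

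The construction is only bookkeeping around the identity $N=qn+s$, and its shape is in fact essentially forced: a tight $\mathcal T(n)$-universal form must have smallest coefficient equal to $n$, and to represent each integer in $[n,2n-1]$ --- which, being below $2n$, can only be a single nonzero term $a_iP_m(x_i)$ with $P_m(x_i)=1$ --- it must literally contain $n,n+1,\dots,2n-1$ among its coefficients. So I do not anticipate a genuine obstacle in the construction itself; the only external input is the existence of a universal $m$-gonal form for every $m\ge 3$, which is classical (Gauss for $m=3$, Lagrange for $m=4$, Cauchy in general). Finally, it is worth noting the case $n=1$ directly: there the first block is the single coefficient $1$, the rescaling is trivial, and $\mathbf{a}$ reduces to $(1)*\mathbf{c}$, so the same reasoning applies.
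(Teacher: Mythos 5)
Your proposal is correct and is essentially the paper's own argument: both constructions take $m$ copies of the coefficient $n$ (a rescaling of the universal form $(1,\dots,1)$ guaranteed by the polygonal number theorem) together with the coefficients $n+1,\dots,2n-1$, and represent a general $N\ge n$ by splitting off a residue in $[n,2n-1]$ and covering the rest by a multiple of $n$. The only differences are cosmetic (you carry one extra copy of $n$, and you spell out the division $N=qn+s$ that the paper leaves as ``one may easily deduce'').
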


\begin{proof}
Let $\mathbf{b}=(n,n,\dots,n)\in \mathcal{N}(m)$ be the vector of length $m$ with every coefficient equal to $n$.
By Fermat polygonal number theorem, we have
$$
R_{\mathcal{GP}_m}(\mathbf{b})=\{ nu : u\in \z_{\ge 0} \}.
$$
From this, one may easily deduce that
$$
R'_{\mathcal{GP}_m}(\mathbf{b}*(n+1)*(n+2)*\cdots*(2n-1))=\mathcal T(n).
$$
This completes the proof.
\end{proof}

\section{An algorithm for tight $\mathcal T(n)$-universal sums of $m$-gonal numbers}
We introduce an algorithm which gives all new tight $\mathcal T(n)$-universal $m$-gonal forms.
Let $m$ be an integer $\ge 3$ and $n$ be a positive integer.
For $\mathbf{a}\in \mathcal{N}$, we denote by $\Psi(\mathbf{a})$ the set of integers in $\mathcal T(n)$ which is not represented by the $m$-gonal form $(\mathcal{GP}_m,\mathbf{a})$, i.e.,
$$
\Psi(\mathbf{a})=\Psi_{m,n}(\mathbf{a})=\mathcal T(n)-R'_{\mathcal{GP}_m}(\mathbf{a}).
$$
We define a function $\psi=\psi_{m,n} : \mathcal{N}\to \mathcal T(n)\cup \{ \infty \}$ by
$$
\psi(\mathbf{a})=\begin{cases}\min(\Psi(\mathbf{a}))&\text{if}\ \ \Psi(\mathbf{a})\neq \emptyset,\\
\infty&\text{otherwise.}\end{cases}
$$
For a vector $\mathbf{a}$ with $\psi(\mathbf{a})<\infty$, we define a set $\mathcal E(\mathbf{a})$ to be
$$
\mathcal E(\mathbf{a})=\{ g\in \z : n\le g\le \psi(\mathbf{a})-n\} \cup \{ \psi(\mathbf{a})\}.
$$
Note that if $\psi(\mathbf{a})<2n$, then $\mathcal E(\mathbf{a})=\{ \psi(\mathbf{a})\}$.
For $k=1,2,3,\dots$, we define subsets $E(k),U(k),NU(k)$ and $A(k)$ of $\mathcal{N}(k)$ recursively as follow;
Put $E(1)=\{(n)\}$.
Define 
$$
U(k)=\{ \mathbf{a}\in E(k) : \psi(\mathbf{a})=\infty \}.
$$
Let $NU(k)$ to be the set of all vectors $\mathbf{a}$ in $U(k)$ such that $\mathbf{b}\not\in \bigcup_{i=1}^{k-1}U(i)$ for every $\mathbf{b}\in \mathcal{N}$ satisfying $\mathbf{b}\prec \mathbf{a}$.
Let $A(k)=E(k)-U(k)$ and
$$
E(k+1)=\bigcup_{\mathbf{a}\in A(k)}\left\{ \mathbf{a}*g : g\in \mathcal E(\mathbf{a})\right\}.
$$
The algorithm terminates once $A(k)=\emptyset$.

\begin{thm}
Under the notations given above,
for a vector $\mathbf{a}\in \mathcal{N}(k)$, a $k$-ary $m$-gonal form $(\mathcal{GP}_m,\mathbf{a})$ is new tight $\mathcal T(n)$-universal if and only if $\mathbf{a}\in NU(k)$.
\end{thm}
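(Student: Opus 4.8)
The plan is to prove both directions by carefully unwinding the recursive definitions of $E(k)$, $U(k)$, $NU(k)$, and $A(k)$, together with the following key structural fact that I would establish first as the backbone of the argument: for every $\mathbf a\in\mathcal N(k)$ with $\psi(\mathbf a)<\infty$, an $m$-gonal form $(\mathcal{GP}_m,\mathbf b)$ with $\mathbf a\prec\mathbf b$ is tight $\mathcal T(n)$-universal \emph{only if} $\mathbf b$ can be built from $\mathbf a$ by first appending some coefficient $g\le\psi(\mathbf a)$ and then appending further coefficients; moreover any usable $g$ must lie in $\mathcal E(\mathbf a)$. The point is that the smallest non-represented value $\psi(\mathbf a)=:N$ must eventually be represented, so some later coefficient $a_j$ of $\mathbf b$ must satisfy $a_j\le N$; since $0\in\mathcal{GP}_m$ and $P_m(1)=1\in\mathcal{GP}_m$, adding a coefficient $g$ with $n\le g\le N-n$ keeps all values of $R'_{\mathcal{GP}_m}(\mathbf a*g)$ inside $\mathcal T(n)$ (using $g\ge n$, $g+(\text{old represented value}\ge n)\ge 2n$ is not quite what we need — rather we need $g\cdot 1$ alone lands in $\mathcal T(n)$, i.e. $g\ge n$, and $g$ combined with anything already in $\mathcal T(n)$ stays $\ge n$), while $g=N$ is also permissible; whereas $N-n<g<N$ would force the new form to represent $g\notin R'_{\mathcal{GP}_m}(\mathbf a)$ but still possibly fine — here I must check that such $g$ produce nothing that $\mathbf a*N$ or a smaller choice does not already produce, which is exactly why $\mathcal E(\mathbf a)$ omits the interval $(N-n,N)$. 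I would isolate this as a lemma: $R'_{\mathcal{GP}_m}(\mathbf a)\subseteq\mathcal T(n)$ for every $\mathbf a\in E(k)$ (an easy induction, since $E(1)=\{(n)\}$ and each append is by an element of $\mathcal E(\mathbf a)\subseteq\mathcal T(n)$ together with the fact that $g\ge n$ and already-represented nonzero values are $\ge n$), and that every tight $\mathcal T(n)$-universal refinement of $\mathbf a$ factors through $\mathbf a*g$ for some $g\in\mathcal E(\mathbf a)$.

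Granting that, the forward direction ($\mathbf a\in NU(k)\Rightarrow$ new tight $\mathcal T(n)$-universal) goes as follows. If $\mathbf a\in NU(k)\subseteq U(k)$ then $\psi(\mathbf a)=\infty$, i.e. $\mathcal T(n)\subseteq R'_{\mathcal{GP}_m}(\mathbf a)$, and combined with the containment lemma $R'_{\mathcal{GP}_m}(\mathbf a)\subseteq\mathcal T(n)$ we get equality, so $(\mathcal{GP}_m,\mathbf a)$ is tight $\mathcal T(n)$-universal. For newness I must show $R'_{\mathcal{GP}_m}(\mathbf b)\subsetneq\mathcal T(n)$ for every $\mathbf b\prec\mathbf a$. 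Suppose not; pick such a $\mathbf b$ that is itself tight $\mathcal T(n)$-universal, and take one of minimal length, say $\mathbf b\in\mathcal N(i)$ with $i<k$. Minimality of length forces $\mathbf b$ to be new, hence — and this needs the \emph{converse} direction, so I would actually prove the two directions by a simultaneous induction on $k$ — $\mathbf b\in NU(i)$. But $NU(i)\subseteq U(i)$ and then the very definition of $NU(k)$ (which requires $\mathbf b\notin\bigcup_{i<k}U(i)$ for all $\mathbf b\prec\mathbf a$) is contradicted. So no such $\mathbf b$ exists and $\mathbf a$ is new.

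For the converse ($(\mathcal{GP}_m,\mathbf a)$ new tight $\mathcal T(n)$-universal $\Rightarrow\mathbf a\in NU(k)$), I argue by strong induction on $k$. Write $\mathbf a=(a_1,\dots,a_k)$. First, $a_1=n$: since $R'_{\mathcal{GP}_m}(\mathbf a)\subseteq\mathcal T(n)$ we need $n\in R'_{\mathcal{GP}_m}(\mathbf a)$, and the only way to represent $n$ using coefficients $\ge a_1$ with generalized $m$-gonal inputs (the smallest positive one being $1$) is $a_1=n$; also no coefficient can be $<n$. Next I show $\mathbf a\in E(k)$: letting $\mathbf a^{(j)}=(a_1,\dots,a_j)$, I show by induction on $j$ that $\mathbf a^{(j)}\in A(j)$ for $j<k$ and $\mathbf a^{(j)}\in E(j)$ for all $j\le k$. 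Indeed $\mathbf a^{(1)}=(n)\in E(1)$; given $\mathbf a^{(j)}\in E(j)$ with $j<k$, it cannot be in $U(j)$ (else $\mathbf a^{(j)}\prec\mathbf a$ would already be tight $\mathcal T(n)$-universal by the containment lemma, contradicting newness of $\mathbf a$), so $\mathbf a^{(j)}\in A(j)$; and then the factoring lemma forces $a_{j+1}\in\mathcal E(\mathbf a^{(j)})$, whence $\mathbf a^{(j+1)}=\mathbf a^{(j)}*a_{j+1}\in E(j+1)$. So $\mathbf a\in E(k)$. Since $\mathbf a$ is tight $\mathcal T(n)$-universal, $\psi(\mathbf a)=\infty$, so $\mathbf a\in U(k)$. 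Finally, for $\mathbf a\in NU(k)$ I must rule out $\mathbf b\prec\mathbf a$ with $\mathbf b\in U(i)$, $i<k$; but $\mathbf b\in U(i)$ means $\psi(\mathbf b)=\infty$, i.e. $\mathbf b$ is tight $\mathcal T(n)$-universal (again using containment), directly contradicting newness of $\mathbf a$. Hence $\mathbf a\in NU(k)$.

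The main obstacle is the factoring lemma, specifically the claim that it suffices to append coefficients from $\mathcal E(\mathbf a)$ rather than from all of $\{n,n+1,\dots,\psi(\mathbf a)\}$ — i.e. that appending a value in the forbidden window $(\psi(\mathbf a)-n,\psi(\mathbf a))$ never helps produce a \emph{new} tight $\mathcal T(n)$-universal form. The delicate point is that if $\mathbf a*g*\mathbf c$ is tight $\mathcal T(n)$-universal with $g$ in that window, one must show some genuinely shorter (proper) subvector is already tight $\mathcal T(n)$-universal, contradicting newness; this is where the inequalities $g\ge n$, $g>\psi(\mathbf a)-n$ and the structure $\{0,1\}\subseteq\mathcal{GP}_m$ get combined, essentially because any integer $\le\psi(\mathbf a)$ representable with the window coefficient present is also representable after deleting it (its only "new" contribution $g$ itself lies in $R'_{\mathcal{GP}_m}$ of... no — here one shows instead that $g$ forces a value in $(\psi(\mathbf a)-n,\psi(\mathbf a))$ to be represented, which by $\psi(\mathbf a)-g<n$ is incompatible with tightness unless that coefficient is redundant). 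I would treat this window-exclusion argument as the technical heart and spell it out in full, while keeping the inductive bookkeeping above brisk.
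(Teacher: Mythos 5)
Your high-level architecture (a containment lemma $R'_{\mathcal{GP}_m}(\mathbf a)\subseteq\mathcal T(n)$ for $\mathbf a\in E(k)$, a simultaneous induction to break the apparent circularity between the two directions, and a window-exclusion argument) is sensible, and you correctly notice that the newness part of the ``if'' direction really does lean on the ``only if'' direction. However, your converse direction contains a genuine gap: you claim that every sorted prefix $\mathbf a^{(j)}=(a_1,\dots,a_j)$ of a new tight $\mathcal T(n)$-universal $\mathbf a$ lies in $E(j)$, deducing $a_{j+1}\in\mathcal E(\mathbf a^{(j)})$ from your factoring lemma. But the factoring lemma only yields that \emph{some} coefficient of $\mathbf a$ outside $\mathbf a^{(j)}$ lies in $\mathcal E(\mathbf a^{(j)})$, namely one that participates in a representation of the truant $\psi(\mathbf a^{(j)})$; that coefficient need not be $a_{j+1}$. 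In particular $a_{j+1}$ may sit in the excluded window $\bigl(\psi(\mathbf a^{(j)})-n,\,\psi(\mathbf a^{(j)})\bigr)$ while a strictly larger coefficient equal to $\psi(\mathbf a^{(j)})$ catches the truant; such a window coefficient is not forbidden in a new form, it is simply appended at a \emph{later} escalation stage once the truant has grown. The escalation tree itself witnesses this: for $(m,n)=(5,2)$ one has $\psi((2))=3$, $\mathcal E((2))=\{3\}$, so $E(2)=\{(2,3)\}$, yet $\psi((2,3))=9$ gives $2\in\mathcal E((2,3))$ and hence $(2,2,3)\in E(3)$ --- a candidate new tight $\mathcal T(2)$-universal form in the paper's own Table 2 whose prefix $(2,2)$ is \emph{not} in $E(2)$. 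Your induction on prefixes breaks at $j=2$ for every such vector. The paper's proof avoids this by selecting the coefficients in the order $i_1,i_2,\dots,i_k$ dictated by the successive truants (the $*$ operation re-sorts each partial multiset so that it lands in $E(j)$), and uses newness to guarantee $\psi(a_{i_1}*\cdots*a_{i_j})<\infty$ for $j<k$, so that all $k$ indices are eventually consumed.

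A secondary issue is that your window-exclusion argument is left unresolved (your final paragraph trails off mid-claim), and it is aimed at the wrong object: what must be shown is not that appending a window value ``never helps,'' but that the coefficient actually used to represent the truant $N=\psi(\mathbf b)$ cannot lie in $(N-n,N)$. The clean argument: in any representation $N=\sum a_iP_m(u_i)$ some index outside $\mathbf b$ has $u_i\neq 0$; tightness forces $a_i\geq n$, so if moreover $a_i>N-n$ then $2a_i>N$ (whether $N\geq 2n$ or $N<2n$), whence $P_m(u_i)=1$ and $N-a_i$ is a represented positive integer smaller than $n$, contradicting tightness. You should prove this statement for the truant-catching coefficient and then run the paper's out-of-order escalation rather than the prefix induction.
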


\begin{proof}
Note that ``if'' part is clear by construction.
To prove ``only if" part, let $\mathbf{a}\in \mathcal{N}(k)$ be a vector such that $(\mathcal{GP}_m,\mathbf{a})$ is tight $\mathcal T(n)$-universal.
Since $R'_{\mathcal{GP}_m}(\mathbf{a})=\mathcal T(n)$, it clearly follows that $a_{i_1}=n$, where we put $i_1=1$.
Note that the set $R_{\mathcal{GP}_m}(a_{i_1})$ does not contain any positive integer less than $n$ and it does contain 0 and all integers from $n$ to $\psi(a_{i_1})-1$.
From this and $\psi(a_{i_1})\in \mathcal T(n)=R'_{\mathcal{GP}_m}(\mathbf{a})$, one may easily deduce that there must be an index $i_2$ different from $i_1$ such that
$$
a_{i_2}\in \mathcal E(a_{i_1})=\{ n,n+1,n+2,\dots,\psi(a_{i_1})-n\} \cup \{\psi(a_{i_1})\}.
$$
Thus we have $a_{i_1}*a_{i_2}\preceq \mathbf{a}$, where $a_{i_1}*a_{i_2}\in E(2)$.
Note that $\psi(a_{i_1})\in R'_{\mathcal{GP}_m}(a_{i_1}*a_{i_2})$.
Assume $R'_S(a_{i_1}*a_{i_2})\subsetneq \mathcal T(n)$ so that $\psi(a_{i_1}*a_{i_2})<\infty$.
One may easily show that there should be an index $i_3$ different from both $i_1$ and $i_2$ such that
$$
a_{i_3}\in \mathcal E(a_{i_1}*a_{i_2})=\{ n,n+1,n+2,\dots,\psi(a_{i_1}*a_{i_2})-n\} \cup \{ \psi(a_{i_1}*a_{i_2})\}
$$
in a similar manner.
We have $a_{i_1}*a_{i_2}*a_{i_3}\in E(3)$ by construction.
Note that
$$
\psi(a_{i_1}*a_{i_2}*\cdots*a_{i_j})<\infty
$$
for every $j=1,2,\dots,k-1$ since otherwise $(\mathcal{GP}_m,\mathbf{a})$ cannot be new.
Repeating this, we arrive at
$$
\mathbf{a}=a_{i_1}*a_{i_2}*\cdots*a_{i_k}\in E(k).
$$
Since $(\mathcal{GP}_m,\mathbf{a})$ is new tight $\mathcal T(n)$-universal, one may easily see that $\mathbf{a}\in NU(k)$.
This completes the proof.
\end{proof}


Though the proof of the following theorem appeared in the proof of \cite[Lemma 2.1]{KL}, we provide it for completeness.
For two positive integers $d$ and $r$, we define a set
$$
\mathcal {AP}_{d,r}=\{dg+r : g\in \n \cup \{0\} \} \ (\subseteq \n).
$$

\begin{lem} \label{lemesc}
Under the same notations given above, there is a positive integer $l=l(m,n)$ depending on $m$ and $n$ such that $A(l)=\emptyset$.
\end{lem}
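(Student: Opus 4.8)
The plan is to show that the escalation tree $\bigcup_{k\ge 1}E(k)$ built by the algorithm is finitely branching and of bounded depth; a finitely branching tree of bounded depth is finite, so $E(k)=\emptyset$, and hence $A(k)=\emptyset$, for all large $k$.

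The routine ingredients come first. By induction, each $E(k)$ is finite: $E(1)=\{(n)\}$, and if $E(k)$ is finite then so is $A(k)\subseteq E(k)$, while for each $\mathbf a\in A(k)$ the set $\mathcal E(\mathbf a)$ is a finite set of integers, so $E(k+1)=\bigcup_{\mathbf a\in A(k)}\{\mathbf a*g:g\in\mathcal E(\mathbf a)\}$ is finite; this is the finite branching. Next, the truant strictly increases along a branch: if $\mathbf a\in A(k)$ and $g\in\mathcal E(\mathbf a)$ then $\psi(\mathbf a)$ is represented by $\mathbf a*g$, because either $g=\psi(\mathbf a)=g\cdot P_m(1)$, or $n\le g\le\psi(\mathbf a)-n$ and then $\psi(\mathbf a)-g\in[n,\psi(\mathbf a))\cap\mathcal T(n)\subseteq R'_{\mathcal{GP}_m}(\mathbf a)$, whence $\psi(\mathbf a)=g\cdot P_m(1)+(\psi(\mathbf a)-g)\in R'_{\mathcal{GP}_m}(\mathbf a*g)$; together with $R'_{\mathcal{GP}_m}(\mathbf a)\subseteq R'_{\mathcal{GP}_m}(\mathbf a*g)$ this gives $\Psi(\mathbf a*g)\subseteq\Psi(\mathbf a)\setminus\{\psi(\mathbf a)\}$, hence $\psi(\mathbf a*g)>\psi(\mathbf a)$. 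Since $(n)$ represents $n$ we have $\psi((n))\ge n+1$, so along any branch $(n)=\mathbf a^{(1)}\prec\mathbf a^{(2)}\prec\cdots$ one has $\psi(\mathbf a^{(j)})\ge n+j$ as long as $\psi$ is finite. Consequently the bounded depth would follow from a \emph{uniform truant bound}: a constant $\Gamma=\Gamma(m,n)$ such that $\psi(\mathbf a)\le\Gamma$ for every $\mathbf a\in\bigcup_k E(k)$ with $\psi(\mathbf a)<\infty$; indeed every branch then reaches $\psi=\infty$ by its $(\Gamma-n+1)$-th vertex, so $E(k)=\emptyset$ for $k>\Gamma-n+1$ and $l=\Gamma$ works.

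The substance is the construction of $\Gamma(m,n)$, for which I would follow \cite[Lemma 2.1]{KL}. Setting $c=m-2$ and $d=m-4$, the completing-the-square identity $8cP_m(x)=(2cx-d)^2-d^2$ turns ``$(\mathcal{GP}_m,\mathbf a)$ represents $N\in\mathcal T(n)$'' into ``$\sum_i a_i z_i^2$ represents the integer $8cN+d^2\sum_i a_i$ with all $z_i\equiv-d\pmod{2c}$'', i.e. a representation problem for a quadratic form on a coset of $(2c\mathbb Z)^k$. One then uses the rigidity of escalators: an escalator of rank $\ge 2$ has coprime coefficients (otherwise all values represented are divisible by some integer $>1$, contradicting that the consecutive integers $n,n+1$ are both represented), and its first two coefficients lie in the fixed finite set $\{n,n+1,\dots,\psi((n))\}$. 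Using this, one shows that for an escalator $\mathbf a$ of sufficiently large rank the associated coset form has no local obstruction at any prime and, having at least five variables, represents every sufficiently large integer in a suitable arithmetic progression $\mathcal{AP}_{d',r'}$; since an escalator $\mathbf a$ of rank $r$ already represents $[n,n+r)\cap\mathcal T(n)$, the finitely many remaining integers are covered once $r$ exceeds an explicit bound, forcing $\psi(\mathbf a)=\infty$ and contradicting $\psi(\mathbf a)<\infty$. This yields $\Gamma(m,n)$.

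The step I expect to be the real obstacle is this last one. Beyond invoking the quantitative theory of representations by quadratic forms (effective bounds for the largest locally-but-not-globally-represented integer, estimates of local densities), one must deal with an apparent circularity: how long an initial segment $\mathbf a$ must already represent in order to kill the local obstructions at the small ``bad'' primes depends on the $p$-adic sizes of the coefficients of $\mathbf a$, while a priori those coefficients are controlled only through the truants at lower levels of the very tree one is trying to bound. Resolving this — essentially by an induction on the rank that bounds coefficients and truants simultaneously — is the crux of \cite{KL}. The passage from the universal case $n=1$ treated there to general $n$ is otherwise cosmetic: it only adds the constant $d^2\sum_i a_i$ to the target integer and replaces ``represents $1,2,\dots$'' by ``represents $n,n+1,\dots$''.
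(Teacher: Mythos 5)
Your reduction is fine as far as it goes: the finiteness of each $E(k)$, the fact that $\psi(\mathbf a*g)>\psi(\mathbf a)$ whenever $g\in\mathcal E(\mathbf a)$ (so truants strictly increase along a branch), and the observation that termination of every branch is what remains to be shown, are all correct and are implicit in the paper as well. The genuine gap is that the one step carrying all the weight --- a uniform truant bound $\Gamma(m,n)$, equivalently the termination of every branch --- is not proved but outsourced to \cite[Lemma 2.1]{KL}, and the sketch you give of that step is not sound as stated. Two concrete problems: (a) your coprimality claim fails for general $n$, since $(n,n)$ is a legitimate rank-$2$ escalator for $n\ge 2$ (only from rank $3$ on does an escalator represent both $n$ and $n+1$); (b) the assertion that for sufficiently large rank ``the associated coset form has no local obstruction at any prime'' is false in general --- local obstructions can persist to arbitrary rank (they are confined to arithmetic progressions with growing moduli), and controlling exactly which obstructions survive is the heart of the matter, not something that disappears. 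You acknowledge the circularity yourself, but acknowledging it does not discharge it, and the claim that the passage from $n=1$ to general $n$ is ``cosmetic'' is undercut by point (a).

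The paper closes this gap by a structurally different and softer argument that you should compare with. For a vector $\mathbf a$ of length $\ge 5$, it invokes O'Meara's local representation theory together with \cite[Theorem 4.9]{CO} to write
$$
\mathcal T(n)-R'_{\mathcal{GP}_m}(\mathbf a)=\bigcup_{i=1}^{\nu_1}\mathcal{AP}_{d_i,r_i}\cup\{e_1,\dots,e_{\nu_2}\},
$$
a finite union of arithmetic progressions (the local obstructions of the genus) together with a finite sporadic set. Each escalation step then strictly decreases one of the two invariants: if the genus of the underlying lattice grows, the number of progressions drops; if not, the escalation removes $\psi(\mathbf a)$, which must then be one of the sporadic $e_j$, so the sporadic set shrinks. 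This well-founded double descent terminates every branch starting from the finite set $E(5)$, with no need for an explicit $\Gamma(m,n)$, no effective local-global bounds, and no coefficient bookkeeping. If you want to keep your uniform-bound strategy you must actually carry out the effective escalation of \cite{KL} for general $n$; otherwise the paper's descent on $(\nu_1,\nu_2)$ is the cleaner route.
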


\begin{proof}
Let $t$ be a positive integer greater than 4 and let $\mathbf{a}=(a_1,a_2,\dots,a_t)$ be a vector in $A(t)=E(t)-U(t)$ so that $\psi(\mathbf{a})<\infty$.
Note that, for any $\z$-lattice $L$ of rank $\ge 4$ with $Q(\gen(L))\subsetneq \n$,
$$
\n-Q(\gen(L))=\bigcup_{i=1}^{\nu'_1} \mathcal {AP}_{d'_i,r'_i}
$$
for some positive integers $\nu'_1,d'_i$ and $r'_i$ with $r'_i<d'_i$ by the results in \cite{OM2}.
From this and \cite[Theorem 4.9]{CO}, one may easily deduce that
$$
\mathcal T(n)-R'_{\mathcal{GP}_m}(\mathbf{a})=\bigcup_{i=1}^{\nu_1} \mathcal {AP}_{d_i,r_i} \cup \{ e_1,e_2,\dots,e_{\nu_2}\}
$$
for some nonnegative integers $\nu_1,\nu_2$ not both 0 and some positive integers $d_i,r_i,e_j$ with $e_j\not\in \bigcup_{i=1}^{\nu_1} \mathcal {AP}_{d_i,r_i}$ for all $j=1,2,\dots,\nu_2$.
Suppose that $g_1$ is a positive integer with $n\le g_1\le \psi(\mathbf{a})-n$ or $g_1=\psi(\mathbf{a})$ so that $\mathbf{a}*g_1\in E(t+1)$.
If
$$
Q(\gen(\langle a_1,a_2,\dots,a_t\rangle))\subsetneq Q(\gen(\langle a_1,a_2,\dots,a_t,g_1\rangle)),
$$
then
$$
\mathcal T(n)-R'_{\mathcal{GP}_m}(\mathbf{a}*g_1)=\bigcup_{w=1}^{\nu_3} \mathcal {AP}_{d_{i_w},r_{i_w}} \cup \{ e'_1,e'_2,\dots,e'_{\nu_4}\},
$$
where $\nu_3$ is an integer with
$$
0\le \nu_3<\nu_1,\quad (i_1,i_2,\dots,i_{\nu_3})\prec (1,2,\dots,\nu_1),
$$
and $\nu_4$ a nonnegative integer.
When
$$
Q(\gen(\langle a_1,a_2,\dots,a_t\rangle))=Q(\gen(\langle a_1,a_2,\dots,a_t,g_1\rangle)),
$$
then it follows that
$$
\mathcal T(n)-R'_{\mathcal{GP}_m}(\mathbf{a}*g_1)=\bigcup_{i=1}^{\nu_1} \mathcal {AP}_{d_i,r_i} \cup \{ e_{j_1},e_{j_2},\dots,e_{j_{\nu_5}}\},
$$
where $\nu_5$ is a nonnegative integer less than $\nu_2$ and $(j_1,j_2,\dots,j_{\nu_5})\prec (1,2,\dots,\nu_2)$.

Let $\mathbf{b}$ be a vector in $A(5)=E(5)-U(5)$.
From what we observed above, we may define a positive integer $w=w(\mathbf{b})$ to be the maximal positive integer $w$ satisfying
$$
b*g_1*g_2*\cdots*g_i\in A(5+i)-U(5+i),\ g_i\in \mathcal E(b*g_1*g_2*\cdots*g_{i-1}),
$$
for every $i=1,2,\dots,w-1$.
Since the set $E(5)$ is finite by construction, we may take $l$ as
$$
l=5+\max \{w(\mathbf{b}) : \mathbf{b}\in E(5)-U(5)\}.
$$
This completes the proof.
\end{proof}

We now introduce our main result which gives a natural generalization of Conway-Schneeber 15-Theorem to the case of tight $\mathcal T(n)$-universal $m$-gonal forms.

\begin{thm} \label{thmcv}
Under the same notations given above, there is a finite set $CS(m,n)$ such that $R'_{\mathcal{GP}_m}(\mathbf{a})=\mathcal T(n)$ if and only if $R'_{\mathcal{GP}_m}(\mathbf{a})\cap \{1,2,\dots,n-1\}=\emptyset$ and $CS(m,n)\subset R'_{\mathcal{GP}_m}(\mathbf{a})$ for any vector $\mathbf{a}\in \mathcal{N}$.
\end{thm}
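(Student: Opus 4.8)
The plan is to take $CS(m,n)$ to be the set of all truants occurring in the escalation tree, together with $n$ itself:
$$
CS(m,n)=\{n\}\cup\bigcup_{k\ge 1}\{\psi(\mathbf a):\mathbf a\in A(k)\}
$$
(one may view $n$ as $\psi$ of the empty vector, so that $CS(m,n)$ is exactly the set of truants of all non-tight-$\mathcal T(n)$-universal vertices of the tree). First I would check that $CS(m,n)$ is finite: by Lemma~\ref{lemesc} one has $A(k)=\emptyset$ for $k\ge l$, and an easy induction shows each $E(k)$ — hence each $A(k)\subseteq E(k)$ — is finite, since $E(1)=\{(n)\}$ and, once $E(k)$ is finite, $E(k+1)=\bigcup_{\mathbf a\in A(k)}\{\mathbf a*g:g\in\mathcal E(\mathbf a)\}$ is a finite union of finite sets (each $\mathbf a\in A(k)$ has $\psi(\mathbf a)<\infty$, hence $\mathcal E(\mathbf a)$ finite). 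The ``only if'' direction is then immediate: if $R'_{\mathcal{GP}_m}(\mathbf a)=\mathcal T(n)$ then $R'_{\mathcal{GP}_m}(\mathbf a)\cap\{1,\dots,n-1\}=\emptyset$, and since $n$ and every truant lie in $\mathcal T(n)$ we get $CS(m,n)\subseteq\mathcal T(n)=R'_{\mathcal{GP}_m}(\mathbf a)$.

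For the ``if'' direction, suppose $R'_{\mathcal{GP}_m}(\mathbf a)\cap\{1,\dots,n-1\}=\emptyset$ and $CS(m,n)\subseteq R'_{\mathcal{GP}_m}(\mathbf a)$, and write $\mathbf a=(a_1,\dots,a_k)$. Since $\mathcal{GP}_m\subseteq\z_{\ge 0}$ has smallest positive element $1$, every nonzero value of $(\mathcal{GP}_m,\mathbf a)$ is $\ge a_1$ while $a_1$ is itself represented; together with $n\in CS(m,n)\subseteq R'_{\mathcal{GP}_m}(\mathbf a)$ and the disjointness hypothesis this forces $a_1=n$ and gives $R'_{\mathcal{GP}_m}(\mathbf a)\subseteq\mathcal T(n)$. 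Now assume, for contradiction, $R'_{\mathcal{GP}_m}(\mathbf a)\ne\mathcal T(n)$, so $N:=\psi(\mathbf a)<\infty$. The idea is to ``escalate along $\mathbf a$'': set $\mathbf b_1=(a_1)=(n)\in E(1)$, which satisfies $\mathbf b_1\preceq\mathbf a$, and inductively, given $\mathbf b_j=a_{i_1}*\cdots*a_{i_j}\in E(j)$ with $\mathbf b_j\preceq\mathbf a$, note $\psi(\mathbf b_j)\le\psi(\mathbf a)=N$ because $R'_{\mathcal{GP}_m}(\mathbf b_j)\subseteq R'_{\mathcal{GP}_m}(\mathbf a)$; if $\psi(\mathbf b_j)<N$ then $\psi(\mathbf b_j)\in R'_{\mathcal{GP}_m}(\mathbf a)$ by minimality of $N$, and arguing exactly as in the proof of the first theorem of this section one finds an index $i_{j+1}\notin\{i_1,\dots,i_j\}$ with $a_{i_{j+1}}\in\mathcal E(\mathbf b_j)$, so $\mathbf b_{j+1}:=\mathbf b_j*a_{i_{j+1}}\in E(j+1)$ satisfies $\mathbf b_{j+1}\preceq\mathbf a$. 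This builds a chain $\mathbf b_1\prec\mathbf b_2\prec\cdots$ of strictly increasing lengths, all $\le k$ (each $\mathbf b_j\preceq\mathbf a$), so it is finite; at its final term $\mathbf b_j$ we must have $\psi(\mathbf b_j)=N$, as $\psi(\mathbf b_j)<N$ would allow the construction to produce a further term. But then $\psi(\mathbf b_j)=N<\infty$ shows $\mathbf b_j\in E(j)-U(j)=A(j)$, so $N=\psi(\mathbf b_j)\in CS(m,n)\subseteq R'_{\mathcal{GP}_m}(\mathbf a)$, contradicting $N\notin R'_{\mathcal{GP}_m}(\mathbf a)$. Hence $R'_{\mathcal{GP}_m}(\mathbf a)=\mathcal T(n)$.

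The step I expect to be the main obstacle is the index selection used above — producing, from a representation $\psi(\mathbf b_j)=\sum_i a_i s_i$ (with $s_i\in\mathcal{GP}_m$, necessarily with $s_i\ne 0$ for some index $i$ outside $\{i_1,\dots,i_j\}$ since $\psi(\mathbf b_j)\notin R'_{\mathcal{GP}_m}(\mathbf b_j)$), an unused coefficient lying in $\mathcal E(\mathbf b_j)=\{n,\dots,\psi(\mathbf b_j)-n\}\cup\{\psi(\mathbf b_j)\}$. For such an index $i_{j+1}$ one has $n\le a_{i_{j+1}}\le\psi(\mathbf b_j)$, and the only case needing care is $\psi(\mathbf b_j)-n<a_{i_{j+1}}<\psi(\mathbf b_j)$, where one uses $a_i\ge n$ for all $i$ to deduce first that $\psi(\mathbf b_j)=a_{i_{j+1}}s_{i_{j+1}}$ with $s_{i_{j+1}}\ge 2$, hence $\psi(\mathbf b_j)<2n$ and $\mathcal E(\mathbf b_j)=\{\psi(\mathbf b_j)\}$, and then that $\psi(\mathbf b_j)$ must in fact be realized by a single unused coefficient. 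This is precisely the computation already made (tacitly) in the proof of the first theorem of this section, so it may be cited rather than repeated; everything else in the argument is bookkeeping with the $\preceq$-order and the definitions of $E(k)$, $U(k)$, $A(k)$.
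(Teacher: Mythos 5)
Your proposal is correct and follows essentially the same route as the paper: you take the identical set $CS(m,n)=\{n\}\cup\bigcup_k\{\psi(\mathbf a):\mathbf a\in A(k)\}$ and prove the nontrivial direction by escalating along $\mathbf a$ through the sets $E(j)$ until a subvector lands in some $U(j)$, which is exactly the step the paper compresses into ``one may easily see that there is a vector $\mathbf b\preceq\mathbf a$ with $\mathbf b\in U(k)$.'' The only (harmless) differences are that you argue by contradiction using the minimality of $\psi(\mathbf a)$ instead of invoking $CS(m,n)\subseteq R'_{\mathcal{GP}_m}(\mathbf a)$ at every stage, and that you spell out the finiteness of $CS(m,n)$ and the index-selection step, both of which the paper leaves implicit.
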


\begin{proof}
Using Lemma \ref{lemesc}, we take the smallest positive integer $l$ satisfying $A(l)=\emptyset$.
Define a finite set
$$
CS(m,n)=\{n\} \cup \bigcup_{k=1}^{l-1}\{ \psi(\mathbf{a}) : \mathbf{a}\in A(k)\}.
$$
Let $\mathbf{a}\in \mathcal{N}$ be a vector with $R'_{\mathcal{GP}_m}(\mathbf{a})\cap \{1,2,\dots,n-1\}=\emptyset$ such that $R'_{\mathcal{GP}_m}(\mathbf{a})\supset CS(m,n)$.
From the condition that $R'_{\mathcal{GP}_m}(\mathbf{a})\supset CS(m,n)$, one may easily see that there is a vector $\mathbf{b}\in \mathcal{N}$ with $\mathbf{b}\preceq \mathbf{a}$ such that $\mathbf{b}\in U(k)$ for some $k$ less than or equal to $l=l(m,n)$.
It follows that
$$
\mathcal T(n)=R'_{\mathcal{GP}_m}(\mathbf{b})\subseteq R'_{\mathcal{GP}_m}(\mathbf{a}).
$$
This completes the proof.
\end{proof}

\begin{rmk} \label{rmkmin}
In Theorem \ref{thmcv}, the set $CS(m,n)$ is minimal in the sense that for any $g\in CS(m,n)$, there is a vector $\mathbf{b}\in \mathcal{N}$ such that $R'_{\mathcal{GP}_m}(\mathbf{b})=\mathcal T(n)-\{g\}$.
To see this, we take $\mathbf{b}=\mathbf{c}*\mathbf{d}$, where $\psi(\mathbf{c})=g$ and $R'_{\mathcal{GP}_m}(\mathbf{d})=\mathcal T(g+1)$.
The existence of such vectors $\mathbf{c}$ and $\mathbf{d}$ follows from the definition of the set $CS(m,n)$ and Lemma \ref{lemexist}, respectively.
\end{rmk}

In the spirit of Remark \ref{rmkmin} and \cite{KLO}, we may call the set $CS(m,n)$ {\it a minimal tight $\mathcal T(n)$-universality criterion set for $m$-gonal forms}.

\begin{prop} \label{propbase}
Let $m$ be an integer greater than or equal to 3 different from 5 and let $n$ be in integer greater than 1.
Under the notations given above, we have the following;
\begin{enumerate} [(i)]
\item $\{n,n+1,n+2,\dots,2n\} \subseteq CS(m,n)$;
\item $E(k)=\{(n,n+1,n+2,\dots,n+k-1)\}$ for $k=1,2,\dots,n$;
\item $U(k)=\emptyset$ $(\text{or equivalently},\ A(k)=E(k))$ for $k=1,2,\dots,n$;
\item $E(n+1)=\{(n,n,n+1,n+2,\dots,2n-1),(n,n+1,n+2,\dots,2n-1,2n)\}$.
\end{enumerate}
\end{prop}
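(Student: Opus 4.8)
The plan is to prove (ii), (iii), (iv) by a single induction on $k$ running from $k=1$ to $k=n+1$, at each stage pinning down $E(k)$ (which turns out to be a singleton up to $k=n$) together with the value of $\psi$ on its element, and then to read off (i) from the definition of $CS(m,n)$. Everything hinges on one elementary arithmetic fact: \emph{for $m\ge 3$ with $m\ne 5$, no generalized $m$-gonal number equals $2$}, so that $\mathcal{GP}_m\subseteq\{0,1\}\cup\{3,4,5,\dots\}$. I would record this first by a direct inspection of $P_m(u)$ for small $|u|$ together with the monotonicity of $P_m$ for $u\ge 2$ and for $u\le -2$ (the only borderline values are $P_m(-1)=m-3$, $P_m(2)=m$, $P_m(-2)=3m-8$, none of which is $2$ once $m\ne 5$).

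Write $\mathbf{a}_k=(n,n+1,\dots,n+k-1)$. The heart of the argument is the claim that for $1\le k\le n$,
$$
R'_{\mathcal{GP}_m}(\mathbf{a}_k)\cap\{1,2,\dots,2n\}=\{n,n+1,\dots,n+k-1\},\qquad\text{hence}\qquad \psi(\mathbf{a}_k)=n+k .
$$
The inclusion ``$\supseteq$'' is immediate by setting one variable equal to $1=P_m(1)$ and the rest to $0$. For ``$\subseteq$'', take any representation $\sum_{i=1}^{k}(n+i-1)s_i$ with all $s_i\in\mathcal{GP}_m$ and split on the number of positive $s_i$: if at least two are positive the sum is $\ge n+(n+1)=2n+1$; if exactly one is positive, it is either $1$, giving a value in $\{n,\dots,n+k-1\}$, or $\ge 3$ by the arithmetic fact, giving a value $\ge 3n>2n$ (here $k\le n$ is used). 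Since every value in $\{n,\dots,n+k-1\}$ lies in $\mathcal T(n)$, the displayed identity forces $\psi(\mathbf{a}_k)=n+k$.

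Now the induction. By definition $E(1)=\{(n)\}=\{\mathbf{a}_1\}$. Assume $E(k)=\{\mathbf{a}_k\}$ for some $1\le k\le n$; the previous paragraph gives $\psi(\mathbf{a}_k)=n+k<\infty$, so $U(k)=\emptyset$, $A(k)=E(k)=\{\mathbf{a}_k\}$, and
$$
\mathcal E(\mathbf{a}_k)=\{g:\ n\le g\le k\}\cup\{n+k\}=\begin{cases}\{n+k\}&\text{if } k<n,\\[2pt] \{n,2n\}&\text{if } k=n,\end{cases}
$$
the first part being empty when $k<n$ because $n>1$. For $k<n$ this yields $E(k+1)=\{\mathbf{a}_k*(n+k)\}=\{\mathbf{a}_{k+1}\}$, which both closes the induction and proves (ii) and (iii) for all $k\le n$; for $k=n$ it yields $E(n+1)=\{\mathbf{a}_n*n,\ \mathbf{a}_n*2n\}$, and carrying out the two insertions (inserting $n$ places it in the second slot, next to the existing $n$; inserting $2n$ places it last) gives exactly the two vectors listed in (iv). Finally, for (i): by Lemma~\ref{lemesc} the algorithm terminates, and since $A(k)=E(k)\ne\emptyset$ for every $k\le n$ it cannot terminate before step $n+1$; hence $\psi(\mathbf{a}_1),\dots,\psi(\mathbf{a}_n)$, i.e.\ $n+1,\dots,2n$, all occur in $\bigcup_{k\ge 1}\{\psi(\mathbf{a}):\mathbf{a}\in A(k)\}$, and together with $n$ this gives $\{n,n+1,\dots,2n\}\subseteq CS(m,n)$.

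The only genuinely delicate point is the core identity $R'_{\mathcal{GP}_m}(\mathbf{a}_k)\cap\{1,\dots,2n\}=\{n,\dots,n+k-1\}$, and even that is short once ``$2\notin\mathcal{GP}_m$'' is in hand; the hypotheses $m\ne 5$ and $n>1$ enter precisely there (to make that fact true, to keep $3n>2n\ge n+k$, and to make $\{g:n\le g\le 1\}$ empty). I would only take care to note the degenerate case $k=1$ (where the ``two positive $s_i$'' subcase is vacuous) and to double-check the $*$-operation bookkeeping so that the vectors in (iv) come out verbatim.
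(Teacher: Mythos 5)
Your proof is correct and takes essentially the same route as the paper's: both hinge on the observation that $2\notin\mathcal{GP}_m$ when $m\neq 5$ and on the computation $\psi(n,n+1,\dots,n+k-1)=n+k$ for $1\le k\le n$, from which the escalation steps and the inclusion $\{n,\dots,2n\}\subseteq CS(m,n)$ follow. You have simply written out in full the details that the paper compresses into ``one may easily show''; the only blemish is the parenthetical claiming $\{g: n\le g\le k\}$ is empty ``because $n>1$'' (it is empty because $k<n$), which does not affect the argument.
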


\begin{proof}
Note that $2\not\in \mathcal{GP}_m$ since $m\neq 5$.
For $i=1,2,\dots,n-1$, one may easily show that $\psi(n)=n+1$ and
$$
\psi(n,n+1,n+2,\dots,n+i)=n+i+1.
$$
The lemma follows directly from this.
\end{proof}

\begin{rmk}
Note that Proposition \ref{propbase}(i),(ii),(iii) also hold for the case of pentagonal forms, i.e., when $m=5$.
However, Proposition \ref{propbase}(iv) is no longer true when $m=5$.
In fact, since $2=P_5(-1)\in \mathcal{GP}_5$, we have
$$
2n\in R'_{\mathcal{GP}_5}(n)\subset R'_{\mathcal{GP}_5}(n,n+1,n+2,\dots,2n-1),
$$
and thus we would have $\psi(n,n+1,n+2,\dots,2n-1)>2n$.
\end{rmk}

\section{Some experimental results}
We provide some experimental results on the escalation algorithm for tight $\mathcal T(n)$-universal $m$-gonal forms.
We first note that, in practice, we use the set
$$
\Psi(\mathbf{a})=\Psi_{m,n}(\mathbf{a})=\{ u \in \mathcal T(n) : u\le 10^6 \}-R'_{\mathcal{GP}_m}(\mathbf{a})
$$
instead of the original definition $\Psi(\mathbf{a})=\mathcal T(n)-R'_{\mathcal{GP}_m}(\mathbf{a})$ in the algorithm so that
$$
\{ u\in \n : n\le u\le 10^6\} \subset R'_{\mathcal{GP}_m}(\mathbf{a}),\ \ \forall \mathbf{a}\in \bigcup_{k=1}^{\infty} U(k).
$$
For any integer $m\ge 3$ and a positive integer $n$, we define $\gamma_{m,n}$ to be the maximum element in the set $CS(m,n)$ which was defined in the proof of Theorem \ref{thmcv}.
By Theorem \ref{thmcv}, if an $m$-gonal form $g$ does not represent any integer less than $n$ and does represent all integers from $n$ to $\gamma_{m,n}$, then $g$ is tight $\mathcal T(n)$-universal.

In Table \ref{tablecmn}, the sets $CS(m,n)$ will be given for some pairs $(m,n)$.
In the table, the pairs $(m,n)$ are marked with $\dag$ when the tight $\mathcal T(n)$-universal $m$-gonal forms are already completely classified so that the set $CS(m,n)$ in the table has been proved to be equal to the set $CS(m,n)$ in the algorithm in Secion 3.

For the classification of tight $\mathcal T(n)$-universal $m$-gonal forms, we refer the reader to \cite{B} for $(m,n)=(4,1)$, \cite{BK} for $(m,n)=(3,1)$, \cite{JO} for $(m,n)=(8,1)$, \cite{J} for $(m,n)=(5,1)$, \cite{KO} for $m=4$ and $n\ge 2$, and \cite{MG} for the others.
Note that universal $m$-gonal forms were classified in chronological order of $m=4,3,8,5$.
The tight universal $m$-gonal forms are classified for $m=4,3$, and tight $\mathcal T(n)$-universal octagonal forms for all $n\ge 2$ are treated in the paper \cite{JK}.
In this spirit, we provide the candidates for tight $\mathcal T(n)$-universal pentagonal forms in the cases of $n=2,3$ in Tables \ref{table52}, \ref{table53}, respectively.
Note that there is exactly one candidate for tight $\mathcal T(n)$-universal pentagonal forms for each $n=4,5,6$, which is $(\mathcal{GP}_5,(n,n+1,n+2,\dots,2n-1))$.

Now we give attention to universal $m$-gonal forms.
For $m=3,4,\dots$, we define
$$
\gamma_m=\gamma_{m,1}=\max(C(m,1)).
$$
In Table \ref{tablecm1}, $\gamma_m$ is given for $3\le m\le 11$ and the proved cases are marked with $\dag$.
We provide all candidates of new universal $m$-gonal forms, for $m=7,9,10,11$, in Tables \ref{table71}-\ref{table111}, since the universal $m$-gonal forms are of particular interest to many mathematicians.

\begin{table}[h]
\caption{$\gamma_m$ for $3\le m\le 11$}
\vskip -7pt
\label{tablecm1}
\begin{center}
\begin{tabular}{|c|c|c|c|c|c|c|c|c|}
\hline
$m$ & $3^{\dag}$ & $4^{\dag}$ & $5^{\dag}$ & 7 & $8^{\dag}$ & 9 & 10 & 11\\
\hline
$\gamma_m$&8&15&109&131&60&69&46&45\\
\hline
\end{tabular}
\end{center}
\end{table}
\vskip -13pt

\begin{table}[h]
\caption{Candidates for new tight $\mathcal T(2)$-universal pentagonal forms $(\mathcal{GP}_5,(a_1,a_2,\dots,a_k))$}
\vskip -8pt
\label{table52}
\begin{center}
\begin{tabular}{|cccc|c|}
\hline
$a_1$ & $a_2$ & $a_3$ & $a_4$ & Conditions on $a_k$ ($3\le k\le 4$)\\
\hline
2 & 2 & 3 & &\\
2 & 3 & $a_3$ & & $6\le a_3\le 9$, $a_3\neq 8$\\
2 & 3 & 3 & $a_4$ & $3\le a_4\le 77$, $a_4\neq 6,7,9,76$\\
2 & 3 & 4 & $a_4$ & $4\le a_4\le 141$, $a_4\neq 6,7,9,140$\\
2 & 3 & 5 & $a_4$ & $5\le a_4\le 53$, $a_4\neq 6,7,9,52$\\
\hline
\end{tabular}
\end{center}
\end{table}
\vskip -13pt

\begin{table}[h]
\caption{Candidates for new tight $\mathcal T(3)$-universal pentagonal forms $(\mathcal{GP}_5,(a_1,a_2,\dots,a_k))$}
\vskip -8pt
\label{table53}
\begin{center}
\begin{tabular}{|ccccc|c|}
\hline
$a_1$ & $a_2$ & $a_3$ & $a_4$ & $a_5$ & Conditions on $a_k$ ($4\le k\le 5$)\\
\hline
3 & 3 & 4 & 5 & & \\
3 & 4 & 4 & 5 & & \\
3 & 4 & 5 & $a_4$ & & $6\le a_4\le 22$, $a_4\neq 10,15,20,21$\\
\hline
3 & 4 & 5 & 5 & $a_5$ & $a_5=5,10,15,20,21,62$ or $23\le a_5\le 59$\\
3 & 4 & 5 & 10 & $a_5$ & $a_5=10,15,20,21,47$ or $23\le a_5\le 44$\\
3 & 4 & 5 & 15 & $a_5$ & $a_5=15,20,21,52$ or $23\le a_5\le 49$\\
\hline
\end{tabular}
\end{center}
\end{table}

\begin{table}[h]
\caption{$CS(m,n)$ for some pairs $(m,n)$}
\vskip -7pt
\label{tablecmn}
\renewcommand{\arraystretch}{1.2}
\begin{center}
\begin{tabular}{|c|c|l|}
\hline
$m$ & $n$ & $CS(m,n)$\\
\hline
\multirow{4}{*}{3}
& $1^{\dag}$ & $\{1,2,4,5,8\}$\\ \cline{2-3}
& $2^{\dag}$ & $\{2,3,4,8,10,16,19\}$\\ \cline{2-3}
& $3^{\dag}$ & $\{3,4,5,6,16\}$\\ \cline{2-3}
& $\ge 4^{\dag}$ & $\{n,n+1,n+2,\dots,2n\}$\\
\hline
\multirow{4}{*}{4} 
& $1^{\dag}$ & $\{1,2,3,5,6,7,10,14,15\}$\\ \cline{2-3}
& $2^{\dag}$ & $\{2,3,4,6,9,10,13,15,17,23\}$\\ \cline{2-3}
& $3^{\dag}$ & $\{3,4,5,6,13,14,18,25,35,46\}$\\ \cline{2-3}
& $\ge 4^{\dag}$ & $\{n,n+1,n+2,\dots,2n\}$\\
\hline
\multirow{5}{*}{5} 
& $1^{\dag}$ & $\{1,3,8,9,11,18,19,25,27,43,98,109\}$\\ \cline{2-3}
& 2 & $\{2,3,9,53,77,141\}$\\  \cline{2-3}
& 3 & $\{3,4,5,22,47,52,62\}$\\ \cline{2-3}
& $4\le n\le 6$ & $\{n,n+1,n+2,\dots,2n-1\}$\\ \cline{2-3}
& $\ge 7^{\dag}$ & $\{n,n+1,n+2,\dots,2n-1\}$\\
\hline
\multirow{5}{*}{7}
& 1 & $\{1,2,3,5,6,9,10,15,16,19,23,31,131\}$\\ \cline{2-3}
& 2 & $\{2,3,4,6,9,10,13,15,18,27,30,32,50\}$\\  \cline{2-3}
& 3 & $\{3,4,5,6,13,14,18\}$\\ \cline{2-3}
& $4\le n\le 10$ & $\{n,n+1,n+2,\dots,2n\}$\\ \cline{2-3}
& $\ge 11^{\dag}$ & $\{n,n+1,n+2,\dots,2n\}$\\
\hline
\multirow{6}{*}{8}
& $1^{\dag}$ & $\{1,2,3,4,6,7,9,12,13,14,18,60\}$\\ \cline{2-3}
& 2 & $\{2,3,4,6,8,9,11,12,14,18\}$\\ \cline{2-3}
& 3 & $\{3,4,5,6,13,14,16,17,21,22,27,36\}$\\ \cline{2-3}
& 4 & $\{4,5,6,7,8,23,28\}$\\ \cline{2-3}
& $5\le n\le 10$ & $\{n,n+1,n+2,\dots,2n\}$\\ \cline{2-3}
& $\ge 11^{\dag}$ & $\{n,n+1,n+2,\dots,2n\}$\\
\hline
\multirow{6}{*}{9}
& 1 & $\{1,2,3,4,5,7,8,10,11,14,16,17,20,22,23,29,32,34,69\}$\\ \cline{2-3}
& 2 & $\{2,3,4,6,8,9,10,11,13,14,16,17,19,23,25,28,34,37,58\}$\\ \cline{2-3}
& 3 & $\{3,4,5,6,13,14,16,17,19,20,21,25,26,28,38,46,53\}$\\ \cline{2-3}
& 4 & $\{4,5,6,7,8,23,25,27,28,32,33\}$\\ \cline{2-3}
& $5\le n\le 12$ & $\{n,n+1,n+2,\dots,2n\}$\\ \cline{2-3}
& $\ge 13^{\dag}$ & $\{n,n+1,n+2,\dots,2n\}$\\
\hline
$\ge 10$ & $\ge 2m-5^{\dag}$ & $\{n,n+1,n+2,\dots,2n\}$\\
\hline
\end{tabular}
\end{center}
\end{table}

\begin{table}[h]
\caption{Candidates for new universal heptagonal forms $(\mathcal{GP}_7,(a_1,a_2,\dots,a_k))$}
\vskip -7pt
\label{table71}
\renewcommand{\arraystretch}{1.1}
\begin{center}
\begin{tabular}{|ccccc|c|}
\hline
$a_1$ & $a_2$ & $a_3$ & $a_4$ & $a_5$ & Conditions on $a_k$ ($4\le k\le 5$)\\
\hline
1 & 1  & 1 & $a_4$ & &  $1\le a_4\le 10$, $a_4\neq 6$\\
1 & 1  & 2 & $a_4$ & &  $2\le a_4\le 23$\\
1 & 1  & 3 & $a_4$ & &  $4\le a_4\le 5$\\
1 & 2  & 2 & $a_4$ & &  $2\le a_4\le 19$\\
1 & 2  & 3 & $a_4$ & &  $3\le a_4\le 31$\\
1 & 2  & 4 & $a_4$ & &  $4\le a_4\le 131$\\
1 & 2  & 5 & $a_4$ & &  $5\le a_4\le 10$, $a_4\neq 6$\\
\hline
1 & 1  & 1 & 6 & $a_5$ &  $a_5=6$ or $11\le a_5\le 16$\\
1 & 1  & 3 & 3 & $a_5$ &  $a_5=3$ or $6\le a_5\le 9$\\
1 & 1  & 3 & 6 & $a_5$ & $6\le a_5\le 15$\\
1 & 2  & 5 & 6 & $a_5$ &  $a_5=6$ or $11\le a_5\le 16$\\
\hline
\end{tabular}
\end{center}
\end{table}

\begin{table}[h]
\caption{Candidates for new universal nonagonal forms $(\mathcal{GP}_9,(a_1,a_2,\dots,a_k))$}
\vskip -7pt
\label{table91}
\begin{center}
\begin{tabular}{|ccccccc|c|}
\hline
$a_1$ & $a_2$ & $a_3$ & $a_4$ & $a_5$ & $a_6$ & $a_7$ & Conditions on $a_k$ ($4\le k\le 7$)\\
\hline
1 & 1  & 1 & $a_4$ & & & & $a_4=2,4$\\
1 & 1  & 2 & $a_4$ & & & & $2\le a_4\le 5$\\
1 & 1  & 3 & $a_4$ & & & & $a_4=4,7$\\
1 & 2  & 2 & $a_4$ & & & & $a_4=3,4,7$\\
1 & 2  & 3 & $a_4$ & & & & $a_4=4,5$\\
1 & 2  & 4 & $a_4$ & & & & $4\le a_4\le 12$, $a_4\neq 6,9$\\
\hline
1 & 1  & 1 & 1 & $a_5$ & & & $a_5=1,3,5$\\
1 & 1  & 1 & 3 & $a_5$ & & & $3\le a_5\le 17$, $a_5\neq 4,7$\\
1 & 1  & 3 & 3 & $a_5$ & & & $5\le a_5\le 11$, $a_5\neq 6,7$\\
1 & 1  & 3 & 5 & $a_5$ & & & $5\le a_5\le 16$, $a_5\neq 7$\\
1 & 1  & 3 & 6 & $a_5$ & & & $6\le a_5\le 14$, $a_5\neq 7$\\
1 & 1  & 3 & 8 & $a_5$ & & & $8\le a_5\le 16$\\
1 & 2  & 2 & 2 & $a_5$ & & & $2\le a_5\le 34$, $a_5\neq 3,4,7$\\
1 & 2  & 2 & 5 & $a_5$ & & & $5\le a_5\le 22$, $a_5\neq 7$\\
1 & 2  & 2 & 6 & $a_5$ & & & $6\le a_5\le 22$, $a_5\neq 7$\\
1 & 2  & 3 & 3 & $a_5$ & & & $a_5=3$ or $6\le a_5\le 10$\\
1 & 2  & 3 & 6 & $a_5$ & & & $6\le a_5\le 23$\\
1 & 2  & 3 & 7 & $a_5$ & & & $7\le a_5\le 17$, $a_5\neq 15$\\
1 & 2  & 4 & 6 & $a_5$ & & & $a_5=6,9$ or $13\le a_5\le 20$\\
1 & 2  & 4 & 9 & $a_5$ & & & $a_5=9$ or $13\le a_5\le 29$\\
1 & 2  & 4 & 13 & $a_5$ & & & $13\le a_5\le 69$\\
1 & 2  & 4 & 14 & $a_5$ & & & $14\le a_5\le 34$\\
\hline
1 & 1  & 3 & 3 & 3 & $a_6$ & & $a_6=6$ or $12\le a_6\le 14$\\
1 & 1  & 3 & 3 & 6 & $a_6$ & & $15\le a_6\le 17$\\
1 & 2  & 3 & 7 & 15 & $a_6$ & & $a_6=15$ or $18\le a_6\le 32$\\
\hline
1 & 1  & 3 & 3 & 3 & 3 & $a_7$ & $a_7=3,15,16,17$\\
\hline
\end{tabular}
\end{center}
\end{table}

\begin{table}[h]
\caption{Candidates for new universal decagonal forms $(\mathcal{GP}_{10},(a_1,a_2,\dots,a_k))$}
\label{table101}
\renewcommand{\arraystretch}{1.1}
\begin{center}
\begin{tabular}{|cccccccc|c|}
\hline
$a_1$ & $a_2$ & $a_3$ & $a_4$ & $a_5$ & $a_6$ & $a_7$ & $a_8$ & Conditions on $a_k$ ($4\le k\le 8$)\\
\hline
1 & 1 & 1 & 4 & & & & & \\
1 & 1 & 2 & $a_4$ & & & & & $2\le a_4\le 5$\\
1 & 2 & 2 & $a_4$ & & & & & $3\le a_4\le 4$\\
1 & 2 & 3 & $a_4$ & & & & & $a_4=4,6$\\
1 & 2 & 4 & $a_4$ & & & & & $a_4=4,5,8$\\
\hline
1 & 1 & 1 & 1 & $a_5$ & & & & $a_5=2,3,5$\\
1 & 1 & 1 & 2 & 6 & & & & \\
1 & 1 & 1 & 3 & $a_5$ & & & & $5\le a_5\le 16$\\
1 & 1 & 3 & 3 & $a_5$ & & & & $a_5=5,8$\\
1 & 1 & 3 & 4 & $a_5$ & & & & $4\le a_5\le 16$\\
1 & 1 & 3 & 5 & $a_5$ & & & & $5\le a_5\le 24$\\
1 & 1 & 3 & 6 & $a_5$ & & & & $7\le a_5\le 11$, $a_5\neq 9$\\
1 & 2 & 2 & 2 & $a_5$ & & & & $a_5=2$ or $5\le a_5\le 8$\\
1 & 2 & 2 & 5 & $a_5$ & & & & $6\le a_5\le 13$\\
1 & 2 & 2 & 6 & $a_5$ & & & & $7\le a_5\le 19$, $a_5\neq 14$\\
1 & 2 & 3 & 3 & $a_5$ & & & & $3\le a_5\le 11$, $a_5\neq 4,6,8$\\
1 & 2 & 3 & 5 & $a_5$ & & & & $5\le a_5\le 16$, $a_5\neq 6$\\
1 & 2 & 3 & 7 & $a_5$ & & & & $7\le a_5\le 26$\\
1 & 2 & 3 & 8 & $a_5$ & & & & $8\le a_5\le 16$, $a_5\neq 12,15$\\
1 & 2 & 4 & 6 & $a_5$ & & & & $6\le a_5\le 23$, $a_5\neq 8$\\
1 & 2 & 4 & 7 & $a_5$ & & & & $7\le a_5\le 39$, $a_5\neq 8$\\
\hline
1 & 1 & 1 & 1 & 1 & $a_6$ & & & $a_6=1,6$\\
1 & 1 & 1 & 3 & 3 & $a_6$ & & & $a_6=3,17,18,19$\\
1 & 1 & 3 & 3 & 3 & $a_6$ & & & $4\le a_6\le 12$, $a_6\neq 5,6,8$\\
1 & 1 & 3 & 3 & 4 & $a_6$ & & & $17\le a_6\le 19$\\
1 & 1 & 3 & 3 & 6 & $a_6$ & & & $a_6=6,9$ or $12\le a_6\le 15$\\
1 & 1 & 3 & 3 & 7 & $a_6$ & & & $7\le a_6\le 19$, $a_6\neq 8$\\
1 & 1 & 3 & 3 & 9 & $a_6$ & & & $9\le a_6\le 18$\\
1 & 1 & 3 & 6 & 6 & $a_6$ & & & $a_6=9$ or $12\le a_6\le 18$\\
1 & 1 & 3 & 6 & 9 & $a_6$ & & & $a_6=9$ or $12\le a_6\le 24$\\
1 & 1 & 3 & 6 & 12 & $a_6$ & & & $12\le a_6\le 24$\\
1 & 2 & 2 & 5 & 5 & $a_6$ & & & $a_6=5$ or $14\le a_6\le 18$\\
1 & 2 & 2 & 6 & 6 & $a_6$ & & & $a_6=6,14$ or $20\le a_6\le 25$\\
1 & 2 & 2 & 6 & 14 & $a_6$ & & & $a_6=14$ or $20\le a_6\le 39$\\
1 & 2 & 3 & 3 & 8 & $a_6$ & & & $12\le a_6\le 19$, $a_6\neq 13,14,16$\\
1 & 2 & 3 & 8 & 12 & $a_6$ & & & $12\le a_6\le 46$, $a_6\neq 13,14,16$\\
1 & 2 & 3 & 8 & 15 & $a_6$ & & & $15\le a_6\le 34$, $a_6\neq 16$\\
\hline 
1 & 1 & 3 & 3 & 3 & 3 & $a_7$ & & $a_7=6,13,14,15$\\
1 & 1 & 3 & 3 & 3 & 6 & $a_7$ & & $16\le a_7\le 18$\\
1 & 1 & 3 & 6 & 6 & 6 & $a_7$ & & $a_7=6$ or $19\le a_7\le 24$\\
\hline
1 & 1 & 3 & 3 & 3 & 3 & 3 & $a_8$ & $a_8=3,16,17,18$\\
\hline
\end{tabular}
\end{center}
\end{table}

\begin{table}[h]
\caption{Candidates for new universal hendecagonal forms $(\mathcal{GP}_{11},(a_1,a_2,\dots,a_k))$}
\label{table111}
\begin{center}
\begin{tabular}{|cccccccc|c|}
\hline
$a_1$ & $a_2$ & $a_3$ & $a_4$ & $a_5$ & $a_6$ & $a_7$ & $a_8$ & Conditions on $a_k$ ($4\le k\le 8$)\\
\hline
1 & 1 & 2 & $a_4$ & & & & & $a_4=3,4$\\
1 & 2 & 2 & 4 & & & & & \\
1 & 2 & 3 & 4 & & & & & \\
1 & 2 & 4 & $a_4$ & & & & & $4\le a_4\le 8$\\
\hline
1 & 1 & 1 & 1 & $a_5$ & & & & $a_5=3,4,5$\\
1 & 1 & 1 & 2 & $a_5$ & & & & $a_5=2,5,6$\\
1 & 1 & 1 & 3 & $a_5$ & & & & $4\le a_5\le 7$\\
1 & 1 & 1 & 4 & $a_5$ & & & & $4\le a_5\le 18$\\
1 & 1 & 2 & 2 & $a_5$ & & & & $a_5=2,5,6,7$\\
1 & 1 & 2 & 5 & $a_5$ & & & & $5\le a_5\le 20$\\
1 & 1 & 3 & 3 & $a_5$ & & & & $a_5=4,5,6,9$\\
1 & 1 & 3 & 4 & $a_5$ & & & & $a_5=5,8,9$\\
1 & 1 & 3 & 5 & $a_5$ & & & & $6\le a_5\le 18$\\
1 & 1 & 3 & 6 & $a_5$ & & & & $6\le a_5\le 13$, $a_5\neq 10$\\
1 & 2 & 2 & 2 & $a_5$ & & & & $2\le a_5\le 9$, $a_5\neq 4$\\
1 & 2 & 2 & 3 & $a_5$ & & & & $3\le a_5\le 9$, $a_5\neq 4$\\
1 & 2 & 2 & 5 & $a_5$ & & & & $5\le a_5\le 14$\\
1 & 2 & 2 & 6 & $a_5$ & & & & $6\le a_5\le 20$, $a_5\neq 17$\\
1 & 2 & 3 & 3 & $a_5$ & & & & $5\le a_5\le 12$, $a_5\neq 6,9$\\
1 & 2 & 3 & 5 & $a_5$ & & & & $5\le a_5\le 12$\\
1 & 2 & 3 & 6 & $a_5$ & & & & $7\le a_5\le 15$\\
1 & 2 & 3 & 7 & $a_5$ & & & & $8\le a_5\le 38$\\
1 & 2 & 4 & 9 & $a_5$ & & & & $9\le a_5\le 18$\\
\hline
1 & 1 & 1 & 1 & 1 & $a_6$ & & & $a_6=2,6$\\
1 & 1 & 1 & 1 & 2 & 7 & & & \\
1 & 1 & 1 & 3 & 3 & $a_6$ & & & $a_6=3,8$ or $10\le a_6\le 21$\\
1 & 1 & 3 & 3 & 3 & $a_6$ & & & $a_6=3,7,8,11,12,13$\\
1 & 1 & 3 & 3 & 7 & $a_6$ & & & $7\le a_6\le 20$, $a_6\neq 9$\\
1 & 1 & 3 & 3 & 8 & $a_6$ & & & $8\le a_6\le 21$, $a_6\neq 9$\\
1 & 1 & 3 & 3 & 10 & $a_6$ & & & $10\le a_6\le 20$\\
1 & 1 & 3 & 4 & 4 & $a_6$ & & & $4\le a_6\le 21$, $a_6\neq 5,8,9$\\
1 & 1 & 3 & 4 & 6 & $a_6$ & & & $a_6=10$ or $14\le a_6\le 27$\\
1 & 1 & 3 & 4 & 7 & $a_6$ & & & $a_6=7$ or $10\le a_6\le 17$\\
1 & 1 & 3 & 4 & 10 & $a_6$ & & & $10\le a_6\le 27$\\
1 & 1 & 3 & 5 & 5 & $a_6$ & & & $a_6=5$ or $19\le a_6\le 23$\\
1 & 1 & 3 & 6 & 10 & $a_6$ & & & $a_6=10$ or $14\le a_6\le 23$\\
1 & 2 & 2 & 6 & 17 & $a_6$ & & & $a_6=17$ or $21\le a_6\le 37$\\
1 & 2 & 3 & 3 & 3 & $a_6$ & & & $a_6=9,13,14,15$\\
1 & 2 & 3 & 3 & 6 & $a_6$ & & & $a_6=6,16,17,18$\\
1 & 2 & 3 & 3 & 9 & $a_6$ & & & $a_6=9$ or  $13\le a_6\le 21$\\
1 & 2 & 3 & 6 & 6 & $a_6$ & & & $a_6=6$ or  $16\le a_6\le 21$\\
1 & 2 & 3 & 7 & 7 & $a_6$ & & & $a_6=7$ or  $39\le a_6\le 45$\\
\hline 
1 & 1 & 1 & 1 & 1 & 1 & $a_7$ & & $a_7=1,7$\\
1 & 1 & 3 & 3 & 3 & 10 & $a_7$ & & $21\le a_7\le 23$\\
1 & 2 & 3 & 3 & 3 & 3 & $a_7$ & & $a_7=6,16,17,18$\\
1 & 2 & 3 & 3 & 3 & 6 & $a_7$ & & $19\le a_7\le 21$\\
\hline
1 & 2 & 3 & 3 & 3 & 3 & 3 & $a_8$ & $a_8=3,19,20,21$\\
\hline
\end{tabular}
\end{center}
\end{table}

\clearpage

\end{document}